\documentclass{ifacconf}

\usepackage{graphicx}
\usepackage{amsmath}
\usepackage{amssymb}
\usepackage{mathrsfs}
\usepackage{epstopdf}
\usepackage{caption}
\usepackage{subcaption}
\usepackage{balance}
\usepackage{natbib}

\newtheorem{theorem}{Theorem}

\newtheorem{proof}{Proof}

\begin{document}
\begin{frontmatter}

\title{Optimal Boundary Control of $2\times2$ Linear Hyperbolic PDEs\thanksref{footnoteinfo}}

\thanks[footnoteinfo]{The present research paper was supported in a part by the EEA Scholarship Program BG09 Project Grant D03-91 under the European Economic Area Financial Mechanism.  This support is greatly appreciated.}

\author[First]{Agus Hasan}, \author[First]{Lars Imsland}, \author[Second]{Ivan Ivanov}, \author[Third]{Snezhana Kostova}, and, \author[Second]{Boryana Bogdanova}
\address[First]{Department of Engineering Cybernetics\\Norwegian University of Science and Technology\\Trondheim, Norway}
\address[Second]{Department of Statistics and Econometrics\\Sofia University - St. Kliment Ohridski\\Sofia, Bulgaria}
\address[Third]{Institute of System Engineering and Robotics\\Bulgarian Academy of Sciences\\Sofia, Bulgaria}

\begin{abstract}
The present paper develops an optimal linear quadratic boundary controller for $2\times2$ linear hyperbolic partial differential equations (PDEs) with actuation on only one end of the domain. First-order necessary conditions for optimality is derived via weak variations and an optimal controller in state-feedback form is presented. The linear quadratic regulator (LQR) controller is calculated from differential algebraic Riccati equations. Numerical examples are performed to show the use of the proposed method.
\end{abstract}

\begin{keyword}
Optimal control, Linear quadratic regulator, Weak variations, Partial differential equations.
\end{keyword}

\end{frontmatter}

\section{Introduction}

Research in control of $2\times2$ linear hyperbolic partial differential equations (PDEs) have attracted great attention in recent years, since they can be used to model many physical systems such as road traffic \citep{Goatin}, gas flow in pipeline \citep{Gugat}, flow of fluids in transmission lines \citep{White,AHasn} and in open channels \citep{Coron}, and mud flow in oil well drilling \citep{ole,land1,haugen,AgusPDE}.

Recent control approaches include backstepping method, where a Volterra integral transformation is used to transform the original system into a stable target system. It turns out that the control law is calculated after solving a Goursat-type PDEs. This type of PDEs can be solved explicitly in terms of Marcum Q-function \citep{Marcum}. Further investigation involved parameter estimations and disturbance rejections have been presented by \cite{Agus1,AHasan,AgusDis} and \cite{aamo}. The backstepping method has been successfully used for control design of many PDEs such as the Schrodinger equation \citep{Guo}, the Ginzburg-Landau equation \citep{Aa}, the Navier-Stokes equation \citep{Vazque}, the surface wave equation \citep{AgusWave}, and the KdV equation \citep{Cerpa1}. Different with other approaches that require the solution of operator Riccati equations, backstepping yields control gain formulas which can be computed using symbolic computation and, in some cases, can even be given explicitly \citep{kris}.

Another control method for the $2\times2$ linear hyperbolic systems of conservative laws has been studied recently by \cite{Van} using receding horizon optimal control approach. Optimal control is a mature concept with numerous applications in science and engineering, e.g., \cite{Dic}. Numerical methods for solving optimal control problems can be divided into two classes of methods. The first method is called the indirect methods, where the calculus of variations is used to determine the first-order optimality conditions of the original optimal control problem \citep{Teo}. The second method is called the direct methods. Here, the state and control variables for the optimal control problem are discretized and the problem is transcribed into nonlinear optimization problems or nonlinear programming problems which can be solved using well-known optimization techniques \citep{AgusOpti}.

In this paper, we are concerned with the problem of linear quadratic optimal control of $2\times2$ linear hyperbolic PDEs using an indirect method. This problem has been previously considered for linear parabolic diffusion-reaction PDEs by \cite{Moura}. Optimal control of these type of PDEs is challenging since in many cases actuation and sensing are often limited to the boundary and the dynamics are notably more complex than ordinary differential equation (ODE) systems. Classical approach usually relies on operator Riccati equation and semi-group theory, which limit the practicality of the method. Thus, by introduce weak variations concept directly to the PDEs, it has been shown by \cite{Mou} that we can bypass semigroup theory and the associated issues with solving operator Riccati equations.

This paper is organized as follow. In section 2, we introduced some definitions and notations used throughout this paper. We stated the problem in section 3. The main result is presented in section 4. Numerical examples are presented in section 5. In this section, we compare the linear quadratic controller with the backstepping controller. Both methods bypass the requirement of solving the operator Riccati equations. The last section contains conclusions.

\section{Preliminaries \& Notations}

To simplify the presentation of this paper, we introduce the following notations \citep{Mou}. For a linear operator $A$ applied to a real-valued function $f$, we define
\begin{eqnarray}
A(f(x)) &:=& \int_0^1\! A(x,y)f(y)\,\mathrm{d}y\nonumber
\end{eqnarray}
where $x\in[0,1]$. The inner product of two functions is defined as
\begin{eqnarray}
\langle f(x),g(x) \rangle &:=& \int_0^1\! f(x)g(x)\,\mathrm{d}x\nonumber
\end{eqnarray}
The subscripts $x$ and $t$ denote partial derivatives with respect to $x$ and $t$, respectively.

\section{Problem Statement}

We consider the following boundary control problem of the 2$\times$2 linear hyperbolic PDEs
\begin{eqnarray}
u_t(x,t) &=& -\epsilon_1u_x(x,t)+c_1(x)v(x,t)\label{main1}\\
v_t(x,t) &=& \epsilon_2v_x(x,t) +c_2(x)u(x,t)\label{main2}\\
u(0,t) &=& qv(0,t)\label{main3}\\
v(1,t) &=& U(t)\label{main4}\\
u(x,0) &=& u_0(x)\label{main5}\\
v(x,0) &=& v_0(x)\label{main6}
\end{eqnarray}
where $x\in[0,1]$, $t>0$, with $\epsilon_1,\epsilon_2>0$. We assume $q\neq0$ and $c_1,c_2\in\mathbb{C}([0,1])$. The initial condition $u_0$ and $v_0$ are assumed to belong to $\mathbb{L}^2([0,1])$.

The objective is to design the state-feedback controller $U(t)$ that minimize the following quadratic function over a finite-time horizon $t\in[0,T]$
\begin{eqnarray}
J&=&\frac{1}{2}\int_0^T\! \left[\langle u(x,t),Q_1(u(x,t))\rangle\right.\nonumber\\
&&\left.+\langle v(x,t),Q_2(v(x,t)) \rangle+RU(t)^2\right]\,\mathrm{d}t\nonumber\\
&&+\frac{1}{2}\langle u(x,T),P_{f1}(u(x,T))\rangle\nonumber\\
&&+\frac{1}{2}\langle v(x,T),P_{f2}(v(x,T))\rangle\label{cost}
\end{eqnarray}
Here $Q_1\geq0$, $Q_2\geq0$, $R>0$, $P_{f1}\geq0$, and $P_{f2}\geq0$ are weighting kernels that respectively weight the states $u$ and $v$, the control $U$, and the terminal states of the system. It is important that $R$ is strictly positive to ensure bounded control signals.

\section{Linear Quadratic Regulator}

In the first part of this section, we derive necessary conditions for optimality of the open-loop linear quadratic control problem using weak variations method. Furthermore, we derive the differential algebraic Riccati equations associated with the state-feedback control law.

\subsection{Open-Loop Control}

The necessary conditions for optimality are utilized using weak variations method presented in \cite{Bern}. This method has been used for reaction-diffusion PDEs by \cite{Mou}.
\begin{theorem}
Consider the $2\times2$ linear hyperbolic PDEs describes by \eqref{main1}-\eqref{main6} defined on the finite-time horizon $t\in[0,T]$ with quadratic cost function \eqref{cost}. Let $u^*(x,t)$, $v^*(x,t)$, $U^*(t)$, $\lambda_1(x,t)$, and $\lambda_2(x,t)$ respectively denote the optimal states, control, and co-states that minimize the quadratic cost. Then the first-order necessary conditions for optimality are given by
\begin{eqnarray}
u^*_t(x,t) &=& -\epsilon_1u^*_x(x,t)+c_1(x)v^*(x,t)\\
v^*_t(x,t) &=& \epsilon_2v^*_x(x,t) +c_2(x)u^*(x,t)\\
-\lambda_{1t}(x,t) &=& \epsilon_1\lambda_{1x}(x,t)+c_2(x)\lambda_2(x,t)+Q_1(u^*(x,t))\label{cos1}\\
-\lambda_{2t}(x,t) &=& -\epsilon_2\lambda_{2x}(x,t)+c_1(x)\lambda_1(x,t)+Q_2(v^*(x,t))\label{cos2}
\end{eqnarray}
with boundary conditions
\begin{eqnarray}
u^*(0,t) &=& qv^*(0,t)\\
v^*(1,t) &=& U^*(t)\\
\lambda_1(1,t) &=& 0\\
\lambda_2(0,t) &=& q\frac{\epsilon_1}{\epsilon_2}\lambda_1(0,t)
\end{eqnarray}
and initial/final conditions
\begin{eqnarray}
u^*(x,0) &=& u_0(x)\\
v^*(x,0) &=& v_0(x)\\
\lambda_1(x,T) &=& P_{f1}(u^*(x,T))\label{cond1}\\
\lambda_2(x,T) &=& P_{f2}(v^*(x,T))\label{cond2}
\end{eqnarray}
where the optimal control input is given by
\begin{eqnarray}
U^*(t) &=& -\frac{\epsilon_2}{R}\lambda_2(1,t)
\end{eqnarray}
\end{theorem}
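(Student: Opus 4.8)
The plan is to treat this as a constrained optimization problem and derive the necessary conditions by the Lagrange-multiplier (adjoint) technique, taking weak variations of an augmented cost functional. First I would adjoin the two evolution equations \eqref{main1}--\eqref{main2} to the cost \eqref{cost} using the co-states $\lambda_1(x,t)$ and $\lambda_2(x,t)$ as multipliers, forming
\begin{align}
\bar J = J + \int_0^T\!\!\int_0^1 \lambda_1\bigl(-u_t-\epsilon_1 u_x + c_1 v\bigr)\,\mathrm{d}x\,\mathrm{d}t + \int_0^T\!\!\int_0^1 \lambda_2\bigl(-v_t+\epsilon_2 v_x + c_2 u\bigr)\,\mathrm{d}x\,\mathrm{d}t. \nonumber
\end{align}
Along an optimal trajectory the constraints hold, so $\bar J = J$ there; I would then perturb the control $U\mapsto U+\delta U$, producing induced state variations $\delta u,\delta v$, and impose $\delta\bar J = 0$ to first order. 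Assuming, as is standard, that the weighting kernels $Q_1,Q_2,P_{f1},P_{f2}$ are self-adjoint, the variation of the quadratic running and terminal costs yields the inner products $\langle\delta u,Q_1(u^*)\rangle$, $\langle\delta v,Q_2(v^*)\rangle$, the scalar $RU^*\delta U$, and the analogous terminal terms.

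The central computational step is integration by parts to move all derivatives off the variations $\delta u,\delta v$ and onto the co-states. Integrating the $\lambda_1\delta u_t$ and $\lambda_2\delta v_t$ terms by parts in time produces the interior terms $\lambda_{1t}\delta u$, $\lambda_{2t}\delta v$ together with temporal boundary contributions at $t=0$ and $t=T$; the $t=0$ terms drop out because the initial data are fixed, so $\delta u(x,0)=\delta v(x,0)=0$. Integrating the $-\epsilon_1\lambda_1\delta u_x$ and $\epsilon_2\lambda_2\delta v_x$ terms by parts in space produces $\epsilon_1\lambda_{1x}\delta u$ and $-\epsilon_2\lambda_{2x}\delta v$ plus spatial boundary terms at $x=0,1$. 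Collecting the interior coefficients of $\delta u$ and of $\delta v$ and setting each to zero gives the adjoint PDEs \eqref{cos1}--\eqref{cos2}, where the off-diagonal couplings $c_1,c_2$ exchange roles because $c_1 v$ is weighted by $\lambda_1$ while $c_2 u$ is weighted by $\lambda_2$; the $t=T$ terms force the transversality conditions \eqref{cond1}--\eqref{cond2}.

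The delicate part is the treatment of the spatial boundary terms, which read
\begin{align}
-\epsilon_1\int_0^T\bigl[\lambda_1\delta u\bigr]_0^1\,\mathrm{d}t + \epsilon_2\int_0^T\bigl[\lambda_2\delta v\bigr]_0^1\,\mathrm{d}t + \int_0^T RU^*\delta U\,\mathrm{d}t, \nonumber
\end{align}
because I must correctly identify which boundary variations are independent and which are coupled by the boundary conditions \eqref{main3}--\eqref{main4}. Since $u(1,t)$ is unconstrained, $\delta u(1,t)$ is free and the vanishing of its coefficient yields $\lambda_1(1,t)=0$. Since $v(1,t)=U(t)$ gives $\delta v(1,t)=\delta U(t)$, combining the $\delta v(1,t)$ term with the control term forces $\epsilon_2\lambda_2(1,t)+RU^*=0$, i.e.\ the stated feedback $U^*(t)=-\tfrac{\epsilon_2}{R}\lambda_2(1,t)$. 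Finally, at $x=0$ the relation $u(0,t)=qv(0,t)$ couples the variations as $\delta u(0,t)=q\,\delta v(0,t)$; substituting this and treating the now-free $\delta v(0,t)$ as arbitrary yields $\epsilon_1 q\lambda_1(0,t)-\epsilon_2\lambda_2(0,t)=0$, which is exactly $\lambda_2(0,t)=q\tfrac{\epsilon_1}{\epsilon_2}\lambda_1(0,t)$.

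I expect the main obstacle to be the bookkeeping at the $x=0$ boundary: one must recognize that $\delta u(0,t)$ and $\delta v(0,t)$ are not both free but are linked through $q$, and that the sign asymmetry between the two transport equations ($-\epsilon_1$ versus $+\epsilon_2$) reverses the orientation of the boundary contributions, so that the two coefficients combine into the single nontrivial co-state condition rather than into two separate ones. A secondary technical point is justifying the interchange of the variation with the integrals and the well-posedness of the linearized variational dynamics, which I would handle by appealing to the linearity of the system together with the regularity assumptions $c_1,c_2\in\mathbb{C}([0,1])$ and $u_0,v_0\in\mathbb{L}^2([0,1])$.
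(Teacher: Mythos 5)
Your proposal is correct and follows essentially the same route as the paper: adjoin the two transport equations with co-states, take a weak variation, integrate by parts in $t$ and $x$, and read off the adjoint PDEs, the transversality conditions at $t=T$, the boundary conditions from the coupled variations $\delta u(0,t)=q\,\delta v(0,t)$ and $\delta v(1,t)=\delta U(t)$, and the stationarity condition $RU^*+\epsilon_2\lambda_2(1,t)=0$. Your explicit remark that the kernels $Q_1,Q_2,P_{f1},P_{f2}$ must be self-adjoint for the variation of the quadratic terms to collapse to a single inner product is a point the paper uses implicitly without stating.
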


\begin{proof}
Suppose $u^*(x,t)$, $v^*(x,t)$, and $U^*(t)$ are the optimal states and control input. We introduce the perturbation from the optimal solution as follow
\begin{eqnarray}
u(x,t) &=& u^*(x,t)+\epsilon\delta u(x,t)\nonumber\\
v(x,t) &=& v^*(x,t)+\epsilon\delta v(x,t)\nonumber\\
U(t) &=& U^*(t)+\epsilon\delta U(t)\nonumber
\end{eqnarray}
Substituting these equations into the objective function \eqref{cost}, yields
\begin{eqnarray}
&&J(u^*+\epsilon\delta u,v^*+\epsilon\delta v,U^*+\epsilon\delta U)=\nonumber\\
&&\frac{1}{2}\int_0^T\! \left[\langle u^*+\epsilon\delta u,Q_1(u^*+\epsilon\delta u)\rangle\right.\nonumber\\
&&\left.+\langle v^*+\epsilon\delta v,Q_2(v^*+\epsilon\delta v) \rangle+R(U^*+\epsilon\delta U)^2\right]\,\mathrm{d}t\nonumber\\
&&+\frac{1}{2}\langle u^*(x,T)+\epsilon\delta u(x,T),P_{f1}(u^*(x,T)+\epsilon\delta u(x,T))\rangle\nonumber\\
&&+\frac{1}{2}\langle v^*(x,T)+\epsilon\delta v(x,T),P_{f2}(v^*(x,T)+\epsilon\delta v(x,T))\rangle\nonumber
\end{eqnarray}
Next, we define a Lagrange functional
\begin{eqnarray}
&&g(\epsilon):=\nonumber\\
&&\frac{1}{2}\int_0^T\! \left[\langle u^*+\epsilon\delta u,Q_1(u^*+\epsilon\delta u)\rangle\right.\nonumber\\
&&\left.+\langle v^*+\epsilon\delta v,Q_2(v^*+\epsilon\delta v) \rangle+R(U^*+\epsilon\delta U)^2\right]\,\mathrm{d}t\nonumber\\
&&+\frac{1}{2}\langle u^*(x,T)+\epsilon\delta u(x,T),P_{f1}(u^*(x,T)+\epsilon\delta u(x,T))\rangle\nonumber\\
&&+\frac{1}{2}\langle v^*(x,T)+\epsilon\delta v(x,T),P_{f2}(v^*(x,T)+\epsilon\delta v(x,T))\rangle\nonumber\\
&&+\int_0^T\! \langle \lambda_1(x,t),-\epsilon_1u^*_x-\epsilon_1\epsilon\delta u_x+c_1(x)v^*+c_1(x)\epsilon\delta v\nonumber\\
&&-\frac{\partial}{\partial t}(u^*+\epsilon\delta u) \rangle\,\mathrm{d}t\nonumber\\
&&+\int_0^T\! \langle \lambda_2(x,t),\epsilon_2v^*_x+\epsilon_2\epsilon\delta v_x+c_2(x)u^*+c_2(x)\epsilon\delta u\nonumber\\
&&-\frac{\partial}{\partial t}(v^*+\epsilon\delta v) \rangle\,\mathrm{d}t\nonumber
\end{eqnarray}
where $\lambda_1$ and $\lambda_2$ denote the Lagrange multipliers (co-states). Computing the first derivative of the Lagrange functional with respect to $\epsilon$, yield
\begin{eqnarray}
&&\frac{\mathrm{d}}{\mathrm{d}\epsilon}g(\epsilon)=\nonumber\\
&&\int_0^T\! \left[\langle \delta u,Q_1(u^*+\epsilon\delta u)\rangle+\langle \delta v,Q_2(v^*+\epsilon\delta v) \rangle\right.\nonumber\\
&&\left.+R(U^*+\epsilon\delta U)\delta U\right]\,\mathrm{d}t\nonumber\\
&&+\langle \delta u(x,T),P_{f1}(u^*(x,T)+\epsilon\delta u(x,T))\rangle\nonumber\\
&&+\langle \delta v(x,T),P_{f2}(v^*(x,T)+\epsilon\delta v(x,T))\rangle\nonumber\\
&&+\int_0^T\! \langle \lambda_1(x,t),-\epsilon_1\delta u_x+c_1(x)\delta v-\frac{\partial}{\partial t}(\delta u) \rangle\,\mathrm{d}t\nonumber\\
&&+\int_0^T\! \langle \lambda_2(x,t),\epsilon_2\delta v_x+c_2(x)\delta u-\frac{\partial}{\partial t}(\delta v)\rangle\,\mathrm{d}t\label{bag}
\end{eqnarray}
The inner product from the last two terms at the right hand side can be simplified as follow
\begin{eqnarray}
\langle \lambda_1(x,t),-\epsilon_1\delta u_x\rangle &=& -\epsilon_1\lambda_1(1,t)\delta u(1,t)+\epsilon_1\lambda_1(0,t)q\delta v(0,t)\nonumber\\
&&+\epsilon_1\langle \lambda_{1x},\delta u \rangle\nonumber\\
\langle \lambda_2(x,t),\epsilon_2\delta v_x\rangle &=& \epsilon_2\lambda_2(1,t)\delta U(t)-\epsilon_2\lambda_2(0,t)\delta v(0,t)\nonumber\\
&&-\epsilon_2\langle \lambda_{2x},\delta v \rangle\nonumber
\end{eqnarray}
Furthermore, we compute
\begin{eqnarray}
\int_0^T\! \langle \lambda_1(x,t),\frac{\partial}{\partial t}(\delta u) \rangle\,\mathrm{d}t &=& \langle \lambda_1(x,T),\delta u(x,T) \rangle \nonumber\\
&&- \int_0^T\! \langle \lambda_{1t},\delta u \rangle \,\mathrm{d}t\nonumber\\
\int_0^T\! \langle \lambda_2(x,t),\frac{\partial}{\partial t}(\delta v) \rangle\,\mathrm{d}t &=& \langle \lambda_2(x,T),\delta v(x,T) \rangle \nonumber\\
&&- \int_0^T\! \langle \lambda_{2t},\delta v \rangle \,\mathrm{d}t\nonumber
\end{eqnarray}
Substituting these equations into \eqref{bag}, yields
\begin{eqnarray}
&&\frac{\mathrm{d}}{\mathrm{d}\epsilon}g(\epsilon)=\nonumber\\
&&\int_0^T\! \left[\langle Q_1(u^*+\epsilon\delta u),\delta u\rangle+\langle \epsilon_1\lambda_{1x}+c_2(x)\lambda_2+\lambda_{1t},\delta u \rangle\right]\,\mathrm{d}t\nonumber\\
&&+\int_0^T\! \left[\langle Q_2(v^*+\epsilon\delta v),\delta v \rangle\right.\nonumber\\
&&\left.+\langle -\epsilon_2\lambda_{2x}+c_1(x)\lambda_1+\lambda_{2t},\delta v \rangle\right]\,\mathrm{d}t\nonumber\\
&&+\int_0^T\! \left[ R(U^*+\epsilon\delta U)+\epsilon_2\lambda_2(1,t)\right]\delta U\,\mathrm{d}t\nonumber\\
&&+\int_0^T\! \left[ -\epsilon_1\lambda_1(1,t)\delta u(1,t)+\epsilon_1\lambda_1(0,t)q\delta v(0,t)\right.\nonumber\\
&&\left.-\epsilon_2\lambda_2(0,t)\delta v(0,t)\right]\,\mathrm{d}t\nonumber\\
&&+\langle P_{f1}(u^*(x,T)+\epsilon\delta u(x,T))-\lambda_1(x,T),\delta u(x,T)\rangle\nonumber\\
&&+\langle P_{f2}(v^*(x,T)+\epsilon\delta v(x,T))-\lambda_2(x,T),\delta v(x,T)\rangle\nonumber
\end{eqnarray}
If we evaluate the above equation at $\epsilon=0$, we have
\begin{eqnarray}
&&\left.\frac{\mathrm{d}}{\mathrm{d}\epsilon}g(\epsilon)\right|_{\epsilon=0}=\nonumber\\
&&\int_0^T\! \left[\langle Q_1(u^*),\delta u\rangle+\langle \epsilon_1\lambda_{1x}+c_2(x)\lambda_2+\lambda_{1t},\delta u \rangle\right]\,\mathrm{d}t\nonumber\\
&&+\int_0^T\! \left[\langle Q_2(v^*),\delta v \rangle+\langle -\epsilon_2\lambda_{2x}+c_1(x)\lambda_1+\lambda_{2t},\delta v \rangle\right]\,\mathrm{d}t\nonumber\\
&&+\int_0^T\! \left[ RU^*+\epsilon_2\lambda_2(1,t)\right]\delta U\,\mathrm{d}t\nonumber\\
&&+\int_0^T\! \left[ -\epsilon_1\lambda_1(1,t)\delta u(1,t)+\epsilon_1\lambda_1(0,t)q\delta v(0,t)\right.\nonumber\\
&&\left.-\epsilon_2\lambda_2(0,t)\delta v(0,t)\right]\,\mathrm{d}t\nonumber\\
&&+\langle P_{f1}(u^*(x,T))-\lambda_1(x,T),\delta u(x,T)\rangle\nonumber\\
&&+\langle P_{f2}(v^*(x,T))-\lambda_2(x,T),\delta v(x,T)\rangle\nonumber
\end{eqnarray}
The necessary conditions for optimality is when
\begin{eqnarray}
\left.\frac{\mathrm{d}}{\mathrm{d}\epsilon}g(\epsilon)\right|_{\epsilon=0}=0\nonumber
\end{eqnarray}
Thus, we have the following first-order optimality conditions
\begin{eqnarray}
-\lambda_{1t}(x,t) &=& \epsilon_1\lambda_{1x}(x,t)+c_2(x)\lambda_2(x,t)+Q_1(u^*(x,t))\nonumber\\
-\lambda_{2t}(x,t) &=& -\epsilon_2\lambda_{2x}(x,t)+c_1(x)\lambda_1(x,t)+Q_2(v^*(x,t))\nonumber\\
\lambda_1(1,t) &=& 0\nonumber\\
\lambda_2(0,t) &=& q\frac{\epsilon_1}{\epsilon_2}\lambda_1(0,t)\nonumber\\
\lambda_1(x,T) &=& P_{f1}(u^*(x,T))\nonumber\\
\lambda_2(x,T) &=& P_{f2}(v^*(x,T))\nonumber\\
U^*(t) &=& -\frac{\epsilon_2}{R}\lambda_2(1,t)\nonumber
\end{eqnarray}
This concludes the proof.
\end{proof}

\subsection{State-Feedback Control}

Utilizing the results by \cite{Moura}, for the state-feedback problem, first we postulate the co-states $\lambda_1$ and $\lambda_2$ are related with the states $u$ and $v$, respectively, according to the following transformation
\begin{eqnarray}
\lambda_1(x,t) = P^{t1}(u^*(x,t)) &=& \int_0^1\! P_1(x,y,t)u^*(y,t)\,\mathrm{d}y\label{lam1}\\
\lambda_2(x,t) = P^{t2}(v^*(x,t)) &=& \int_0^1\! P_2(x,y,t)v^*(y,t)\,\mathrm{d}y\label{lam2}
\end{eqnarray}
The superscript on $P^{t}$ suggest the linear operator is time dependent. For the linear $2\times2$ hyperbolic PDEs, we have the following result.
\begin{theorem}
The optimal control in state-feedback form is given by
\begin{eqnarray}
U^*(t) &=& -\frac{\epsilon_2}{R}\int_0^1\! P_2(1,y,t)v^*(y,t)\,\mathrm{d}y\label{optcon}
\end{eqnarray}
where the time varying transformation $P_2$ is the solution to the following differential algebraic Riccati equations
\begin{eqnarray}
-P_{2t} &=& -\frac{\epsilon_2^2}{R}P_2(x,1,t)P_2(1,y,t)\nonumber\\
&&-\epsilon_2P_{2y}-\epsilon_2P_{2x}+Q_2\label{11}\\
-P_{1t} &=& \epsilon_1P_{1y}+\epsilon_1P_{1x}+Q_1\\
c_1(x)P_1 + c_2(x)P_2 &=& 0\label{111}
\end{eqnarray}
with boundary conditions
\begin{eqnarray}
P_2(x,0,t) &=& 0\label{21}\\
P_1(x,1,t) &=& 0\\
P_1(x,0,t) &=& 0\\
P_1(1,y,t) &=& 0\\
P_1(0,y,t) &=& 0\\
P_2(0,y,t) &=& 0\label{22}
\end{eqnarray}
and final conditions
\begin{eqnarray}
P_1(x,y,T) &=& P_{f1}(x,y)\\
P_2(x,y,T) &=& P_{f2}(x,y)\label{fin2}
\end{eqnarray}
\end{theorem}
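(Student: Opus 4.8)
The plan is to verify that the postulated relations \eqref{lam1}--\eqref{lam2} are consistent with the optimality system of Theorem~1, and in doing so to extract the equations governing $P_1$ and $P_2$. Since the control law $U^*(t)=-\frac{\epsilon_2}{R}\lambda_2(1,t)$ combined with \eqref{lam2} yields \eqref{optcon} immediately, the substance is to show that the kernels must satisfy \eqref{11}--\eqref{fin2}.

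First I would differentiate each transformation in time. For $\lambda_2(x,t)=\int_0^1 P_2(x,y,t)v^*(y,t)\,\mathrm{d}y$, the product rule gives a $P_{2t}$ term and a $\int_0^1 P_2 v^*_t\,\mathrm{d}y$ term; substituting the state equation $v^*_t=\epsilon_2 v^*_y+c_2 u^*$ and integrating the $\epsilon_2 v^*_y$ contribution by parts in $y$ transfers the derivative onto $P_2$ (producing the $-\epsilon_2 P_{2y}$ term) and leaves boundary contributions at $y=0,1$. Carrying out the same steps for $\lambda_1$ with $u^*_t=-\epsilon_1 u^*_x+c_1 v^*$ produces the $+\epsilon_1 P_{1y}$ term together with its own boundary contributions.

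Next I would process the boundary terms. Substituting the control law into the $y=1$ term, the factor $\epsilon_2 P_2(x,1,t)v^*(1,t)$ becomes, via $v^*(1,t)=U^*(t)=-\frac{\epsilon_2}{R}\int_0^1 P_2(1,y,t)v^*\,\mathrm{d}y$, the quadratic Riccati contribution $-\frac{\epsilon_2^2}{R}P_2(x,1,t)P_2(1,y,t)$ inside the $y$-integral. The remaining boundary terms are forced to vanish by imposing $P_2(x,0,t)=0$, $P_1(x,0,t)=0$, and $P_1(x,1,t)=0$, which together with the co-state boundary conditions $\lambda_1(1,t)=0$ and $\lambda_2(0,t)=q\frac{\epsilon_1}{\epsilon_2}\lambda_1(0,t)$ (forcing $P_1(1,y,t)=P_1(0,y,t)=P_2(0,y,t)=0$) reproduce \eqref{21}--\eqref{22}. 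Evaluating the transformations at $t=T$ against \eqref{cond1}--\eqref{cond2} gives the final conditions $P_1(x,y,T)=P_{f1}(x,y)$ and $P_2(x,y,T)=P_{f2}(x,y)$.

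Finally, I would equate the two expressions obtained for $\lambda_{2t}$ (resp.\ $\lambda_{1t}$) --- one from differentiating the transformation, one from the co-state equation \eqref{cos2} (resp.\ \eqref{cos1}) --- and collect terms. Here is the crux: after substitution each expression contains \emph{both} $u^*$ and $v^*$ under the integral, because $\lambda_1$ feeds the $\lambda_2$-dynamics through $c_1$ and $\lambda_2$ feeds the $\lambda_1$-dynamics through $c_2$. Invoking the arbitrariness of the optimal trajectory (a fundamental-lemma argument), the kernels multiplying $v^*$ must match and those multiplying $u^*$ must match separately; the ``diagonal'' match yields the Riccati PDE \eqref{11} and the transport equation for $P_1$, while the ``off-diagonal'' cross terms force the algebraic constraint $c_1 P_1+c_2 P_2=0$ of \eqref{111}. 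I expect isolating these cross terms, and justifying that the two kernel identities may be imposed independently, to be the main obstacle; it is precisely this coupling that distinguishes the present hyperbolic case from the scalar parabolic setting of \cite{Moura} and that produces the differential-algebraic, rather than purely differential, structure of the Riccati system.
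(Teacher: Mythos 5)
Your proposal is correct and follows essentially the same route as the paper: differentiate the postulated transformations in time, substitute the state dynamics and integrate by parts in $y$, use the control law to turn the $y=1$ boundary term into the quadratic Riccati term, kill the remaining boundary terms via the kernel boundary conditions, and match the $u^*$- and $v^*$-kernels separately so that the cross terms yield the algebraic constraint $c_1P_1+c_2P_2=0$. If anything, you make explicit the kernel-matching (fundamental-lemma) step that the paper leaves implicit in ``plugging these terms into \eqref{cos1}--\eqref{cos2}.''
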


\begin{proof}
Evaluating every term in \eqref{cos1}-\eqref{cos2} using the definitions \eqref{lam1}-\eqref{lam2} and integration by parts, we have the first terms
\begin{eqnarray}
\lambda_{1t}(x,t) &=& \int_0^1\! P_{1t}(x,y,t)u^*(y,t)\,\mathrm{d}y\nonumber\\
&&-\epsilon_1P_1(x,1,t)u^*(1,t)+\epsilon_1P_1(x,0,t)u^*(0,t)\nonumber\\
&&+\int_0^1\! \epsilon_1P_{1y}(x,y,t)u^*(y,t)\,\mathrm{d}y\nonumber\\
&&+ \int_0^1\! c_1(x)P_1(x,y,t)v^*(y,t)\,\mathrm{d}y\nonumber\\
\lambda_{2t}(x,t) &=& \int_0^1\! P_{2t}(x,y,t)v^*(y,t)\,\mathrm{d}y\nonumber\\
&&+\epsilon_2P_2(x,1,t)v^*(1,t)-\epsilon_2P_2(x,0,t)v^*(0,t)\nonumber\\
&&-\int_0^1\! \epsilon_2P_{2y}(x,y,t)v^*(y,t)\,\mathrm{d}y\nonumber\\
&&+\int_0^1\! c_2(x)P_2(x,y,t)u^*(y,t)\,\mathrm{d}y\nonumber
\end{eqnarray}
the second terms are given by
\begin{eqnarray}
\epsilon_1\lambda_{1x}(x,t) &=& \int_0^1\! \epsilon_1P_{1x}(x,y,t)u^*(y,t)\,\mathrm{d}y\nonumber\\
-\epsilon_2\lambda_{2x}(x,t) &=& -\int_0^1\! \epsilon_2P_{2x}(x,y,t)v^*(y,t)\,\mathrm{d}y\nonumber
\end{eqnarray}
the third terms are given by
\begin{eqnarray}
c_2(x)\lambda_2(x,t) &=& \int_0^1\! c_2(x)P_2(x,y,t)v^*(y,t)\,\mathrm{d}y\nonumber\\
c_1(x)\lambda_1(x,t) &=& \int_0^1\! c_1(x)P_1(x,y,t)u^*(y,t)\,\mathrm{d}y\nonumber
\end{eqnarray}
and the last terms are given by
\begin{eqnarray}
Q_1(u^*(x,t)) &=& \int_0^1\! Q_1(x,y)u^*(y,t)\,\mathrm{d}y\nonumber\\
Q_2(v^*(x,t)) &=& \int_0^1\! Q_2(x,y)v^*(y,t)\,\mathrm{d}y\nonumber
\end{eqnarray}
Plugging these terms into \eqref{cos1}-\eqref{cos2}, we have \eqref{11}-\eqref{22}. The last two conditions are obtained from \eqref{cond1}-\eqref{cond2}. This concludes the proof.
\end{proof}

Note that for the infinite-time horizon, the LQR controller is given by the following steady-state solution of the differential algebraic Riccati equations
\begin{eqnarray}
-\frac{\epsilon_2^2}{R}P_2^{\infty}(x,1,t)P_2^{\infty}(1,y,t)-\epsilon_2P_{2y}^{\infty}-\epsilon_2P_{2x}^{\infty}&=& -Q_2\nonumber\\
\epsilon_1P_{1y}^{\infty}+\epsilon_1P_{1x}^{\infty}&=& -Q_1\nonumber\\
c_1(x)P_1^{\infty} + c_2(x)P_2^{\infty} &=& 0\nonumber
\end{eqnarray}
with boundary conditions \eqref{21}-\eqref{22}. The time-invariant steady-feedback controller is given by
\begin{eqnarray}
U^*(t) &=& -\frac{\epsilon_2}{R}\int_0^1\! P_2^{\infty}(1,y,t)v^*(y,t)\,\mathrm{d}y\nonumber
\end{eqnarray}
Unfortunately, we cannot prove the existence of the solution satisfy \eqref{11}-\eqref{fin2}. For the example in the following section, we use parameters and weighting kernels such that the solution exists. Thus, the well-posedness problem of the differential algebraic Riccati equations remains open.

\section{Numerical Simulations}

In this section, we perform two cases. In the first case, we show the LQR controller solve the optimal control problem in finite time. In the second case, the LQR controller is compared with the backstepping controller. In both cases, we use the following parameters
\begin{center}
\begin{tabular}{lr}
\hline
\multicolumn{2}{c}{Simulation parameters} \\
\cline{1-2}
Symbol & Value \\
\hline
$\epsilon_1$    & 1      \\
$\epsilon_2$    & 1      \\
$c_1$     & 10      \\
$c_2$     & 20      \\
q      & 1       \\
T      & 1       \\
\hline
\end{tabular}
\end{center}

\subsection{Case 1: Performance of the LQR controller}

In the first case, we choose the following weighting kernels
\begin{center}
\begin{tabular}{lr}
\hline
\multicolumn{2}{c}{Simulation kernels} \\
\cline{1-2}
Symbol & Value \\
\hline
R      & 1       \\
$P_{f1}$      & $\sin\left(\pi x\right)\sin\left(\pi y\right)$      \\
$P_{f2}$      & $5\sin\left(\pi x\right)\sin\left(\pi y\right)$      \\
$Q_1$      & $10\sin\left(\pi x\right)\sin\left(\pi y\right)$      \\
$Q_2$      & $20\sin\left(\pi x\right)\sin\left(\pi y\right)$       \\
\hline
\end{tabular}
\end{center}
The algebraic Riccati equation is given by
\begin{eqnarray}
-P_{2t} &=& -P_2(x,1,t)P_2(1,y,t)-P_{2y}-P_{2x}+Q_2\nonumber\\
-P_{1t} &=& P_{1y}+P_{1x}+Q_1\nonumber\\
P_1 + 2P_2 &=& 0\nonumber
\end{eqnarray}
The above equation is solved numerically. The solution is used to calculate the optimal state-feedback controller \eqref{optcon} and the result can be seen in the following figure.
\begin{figure}[h!]
  \centering
      \includegraphics[width=0.5\textwidth]{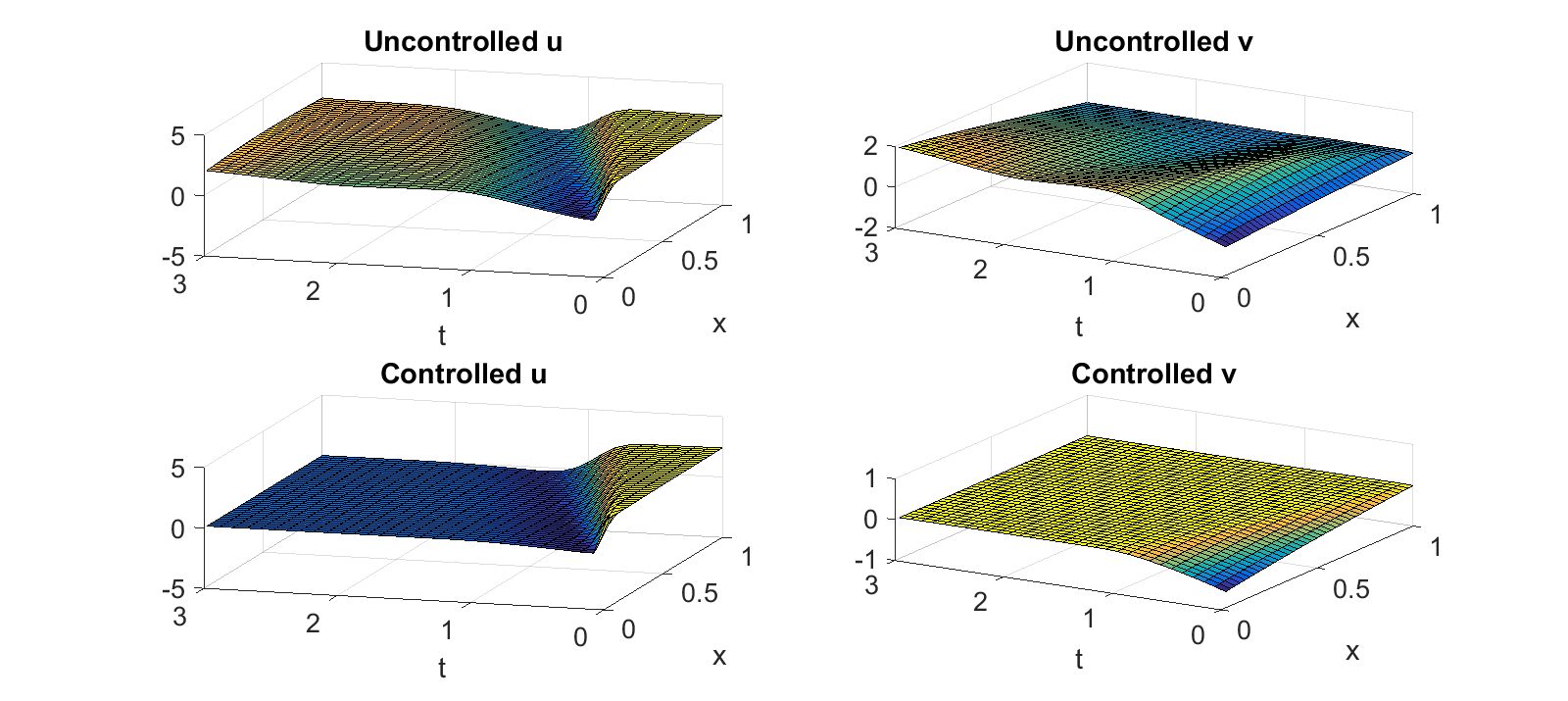}
  \caption{Uncontrolled vs controlled case using the LQR controller \eqref{optcon}.}
\label{fig1}
\end{figure}

It can be observed from figure 1 that the LQR controller \eqref{optcon} regulates the linear hyperbolic system into its equilibrium.

\subsection{Case 2: comparison with the backstepping controller}

We compare the controller \eqref{optcon} with the controller obtained from the backstepping method by \cite{vaz}. The controller from the backstepping method is given by
\begin{eqnarray}
U(t) &=& \int_0^1\! \left(K^{vu}(1,y)u(y,t) + K^{vv}(1,y)v(y,t)\right) \,\mathrm{d}y\nonumber
\end{eqnarray}
where the backstepping kernels $K^{vu}$ and $K^{vv}$ are obtained from
\begin{eqnarray}
\epsilon_2K_x^{vu}(x,y)-\epsilon_1K_y^{vu}(x,y) &=& c_2(y)K^{vv}(x,y)\nonumber\\
\epsilon_2K_x^{vv}(x,y)+\epsilon_2K_y^{vv}(x,y) &=& c_1(y)K^{vu}(x,y)\nonumber
\end{eqnarray}
on a triangular domain $\{(x,y):0\leq y\leq x\leq1\}$ with the following boundary conditions
\begin{eqnarray}
K^{vv}(x,0) &=& \frac{q\epsilon_1}{\epsilon_2}K^{vu}(x,0)\nonumber\\
K^{vu}(x,x) &=& -\frac{c_2(x)}{\epsilon_1+\epsilon_2}\nonumber
\end{eqnarray}
The solutions of the backstepping kernels are given by
\begin{eqnarray}
K^{vu}(1,y) &=& -\frac{1}{2}\left\{10I_0\left[10\sqrt{\frac{1-y}{1+y}}\right]\right.\nonumber\\
&&\left.+10\sqrt{\frac{1-y}{1+y}}I_1\left[10\sqrt{\frac{1-y}{1+y}}\right]\right\}\nonumber\\
K^{vv}(1,y) &=& -\frac{1}{2}\left\{10I_0\left[10\sqrt{\frac{1-y}{1+y}}\right]\right.\nonumber\\
&&\left.+10\sqrt{\frac{1-y}{1+y}}I_1\left[10\sqrt{\frac{1-y}{1+y}}\right]\right\}\nonumber
\end{eqnarray}
where $I_0$ and $I_1$ denote modified Bessel function defined as
\begin{eqnarray}
I_n(x) &=& \sum_{m=0}^{\infty} \frac{(\frac{x}{2})^{n+2m}}{m!(m+n)!}\nonumber
\end{eqnarray}
We calculate the control signal and the tracking error function $\|u\|_{\mathbb{L}^2}$. In figure 2, we can observe that the LQR controller performs better compared to the backstepping controller. However, while the gains in the backstepping controller can be computed analytically, the existence of the solution for the algebraic Riccati equation remains an open question.

\begin{figure}[h!]
  \centering
      \includegraphics[width=0.5\textwidth]{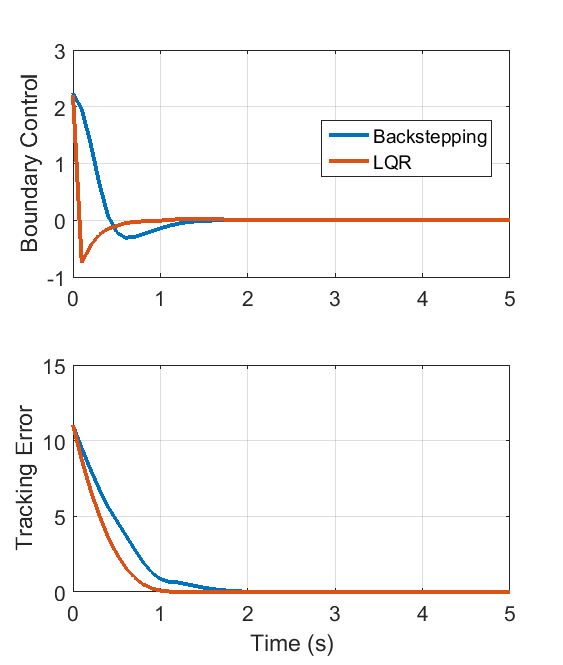}
  \caption{Comparison between backstepping and LQR controller.}
\label{fig1}
\end{figure}

\section{Conclusions}

In this paper, we derived LQR results for boundary controlled $2\times2$ linear hyperbolic PDEs. The necessary conditions for optimality for the open-loop system are obtained via weak variations, while the explicit state-feedback is derived from the co-state systems. In the given examples, the state-feedback laws are calculated after solving the differential algebraic Riccati equations for special cases. For general cases, the existence of solution remains an open question. This will be considered in the future work.

\balance
\bibliography{ifacconf}

\begin{thebibliography}{28}
\providecommand{\natexlab}[1]{#1}
\providecommand{\url}[1]{\texttt{#1}}
\providecommand{\urlprefix}{URL }
\expandafter\ifx\csname urlstyle\endcsname\relax
  \providecommand{\doi}[1]{doi:\discretionary{}{}{}#1}\else
  \providecommand{\doi}{doi:\discretionary{}{}{}\begingroup
  \urlstyle{rm}\Url}\fi

\bibitem[{Aamo(2013)}]{aamo}
Aamo, O. (2013).
\newblock Disturbance rejection in 2$\times$2 linear hyperbolic systems.
\newblock \emph{IEEE Transaction on Automatic Control}, 58(5), 1095--1106.

\bibitem[{Aamo et~al.(2005)Aamo, Smyshlyaev, and Krstic}]{Aa}
Aamo, O., Smyshlyaev, A., and Krstic, M. (2005).
\newblock Boundary control of the linearized ginzburg-landau model of vortex
  shedding.
\newblock \emph{SIAM Journal of Control and Optimization}, 43, 1953--1971.

\bibitem[{Bernstein and Tsiotras(2009)}]{Bern}
Bernstein, D. and Tsiotras, P. (2009).
\newblock \emph{A Course in Classical Optimal Control}.
\newblock Pre-print, NY.

\bibitem[{Coron et~al.(1999)Coron, Andrea-Novel, and Bastin}]{Coron}
Coron, J., Andrea-Novel, B., and Bastin, G. (1999).
\newblock A lyapunov approach to control irrigation canals modeled by
  saint-venant equations.
\newblock In \emph{Proceedings of the European Control Conference}. Karlsruhe,
  Germany.

\bibitem[{Goatin(2006)}]{Goatin}
Goatin, P. (2006).
\newblock The aw-rascle vehicular traffic flow model with phase transitions.
\newblock \emph{Mathematical and Computer Modelling}, 44, 287--303.

\bibitem[{Gugat and Dick(2011)}]{Gugat}
Gugat, M. and Dick, M. (2011).
\newblock Time-delayed boundary feedback stabilization of the isothermal euler
  equations with friction.
\newblock \emph{Mathematical Control and Related Fields}, 1, 469--491.

\bibitem[{Hasan(2014{\natexlab{a}})}]{Agus1}
Hasan, A. (2014{\natexlab{a}}).
\newblock Adaptive boundary control and observer of linear hyperbolic systems
  with application to managed pressure drilling.
\newblock In \emph{Proceedings of ASME Dynamic Systems and Control Conference}.
  San Antonio, USA.

\bibitem[{Hasan(2014{\natexlab{b}})}]{AgusDis}
Hasan, A. (2014{\natexlab{b}}).
\newblock Disturbance attenuation of n+ 1 coupled hyperbolic pdes.
\newblock In \emph{Conference on Decision and Control}. Los Angeles, USA.

\bibitem[{Hasan(2015)}]{AHasan}
Hasan, A. (2015).
\newblock Adaptive boundary observer for nonlinear hyperbolic systems: Design
  and field testing in managed pressure drilling.
\newblock In \emph{American Control Conference}. Chicago, USA.

\bibitem[{Hasan et~al.(2016)Hasan, Aamo, and Krstic}]{AgusPDE}
Hasan, A., Aamo, O., and Krstic, M. (2016).
\newblock Boundary observer design for hyperbolic pde-ode cascade systems.
\newblock \emph{Automatica}, 68, 75--86.

\bibitem[{Hasan and Foss(2015)}]{AgusOpti}
Hasan, A. and Foss, B. (2015).
\newblock Optimal switching time control of petroleum reservoirs.
\newblock \emph{Journal of Petroleum Science and Engineering}, 131, 131--137.

\bibitem[{Hasan et~al.(2011)Hasan, Foss, and Aamo}]{AgusWave}
Hasan, A., Foss, B., and Aamo, O. (2011).
\newblock Boundary control of long waves in nonlinear dispersive systems.
\newblock In \emph{Australian Control Conference}. Melbourne, Australia.

\bibitem[{Hasan et~al.(2013)Hasan, Foss, Krogstad, Gunnerud, and
  Teixeira}]{Dic}
Hasan, A., Foss, B., Krogstad, S., Gunnerud, V., and Teixeira, A. (2013).
\newblock Decision analysis for long-term and short-term production
  optimization applied to the voador field.
\newblock In \emph{SPE Reservoir Characterisation and Simulation Conference and
  Exhibition}. Abu Dhabi, United Arab Emirates.

\bibitem[{Hasan and Imsland(2014)}]{AHasn}
Hasan, A. and Imsland, L. (2014).
\newblock Moving horizon estimation in managed pressure drilling using
  distributed models.
\newblock In \emph{IEEE Conference on Control Applications (CCA)}. Nice,
  France.

\bibitem[{Hauge et~al.(2013)Hauge, Aamo, and Godhavn}]{haugen}
Hauge, E., Aamo, O., and Godhavn, J. (2013).
\newblock Application of an infinite-dimensional observer for drilling systems
  incorporating kick and loss detection.
\newblock In \emph{Proceedings of the European Control Conference}. Zurich,
  Switzerland.

\bibitem[{Kaasa et~al.(2012)Kaasa, Stamnes, Aamo, and Imsland}]{ole}
Kaasa, G., Stamnes, O., Aamo, O., and Imsland, L. (2012).
\newblock Simplified hydraulics model used for intelligent estimation of
  downhole pressure for a managed-pressure-drilling control system.
\newblock \emph{SPE Drilling $\&$ Completion}, 27(1), 127--138.

\bibitem[{Krstic et~al.(2011)Krstic, Guo, and Smyshlyaev}]{Guo}
Krstic, M., Guo, B., and Smyshlyaev, A. (2011).
\newblock Boundary controllers and observers for the linearized schrodinger
  equation.
\newblock \emph{SIAM Journal of Control and Optimization}, 49, 1479--1497.

\bibitem[{Krstic and Smyshlyaev(2008)}]{kris}
Krstic, M. and Smyshlyaev, A. (2008).
\newblock \emph{Boundary Control of PDEs}.
\newblock SIAM, Philadelphia, PA.

\bibitem[{Landet et~al.(2013)Landet, Pavlov, and Aamo}]{land1}
Landet, I., Pavlov, A., and Aamo, O.M. (2013).
\newblock Modeling and control of heave-induced pressure ﬂuctuations in
  managed pressure drilling.
\newblock \emph{IEEE Transaction on Control System Technology}, 21(4),
  1340--1351.

\bibitem[{Marx and Cerpa(2014)}]{Cerpa1}
Marx, S. and Cerpa, E. (2014).
\newblock Output feedback control of the linear korteweg-de vries equation.
\newblock In \emph{IEEE Conference on Decision and Control}. Los Angeles, USA.

\bibitem[{Moura and Fathy(2013)}]{Moura}
Moura, S. and Fathy, H. (2013).
\newblock Optimal boundary control of reaction–diffusion partial differential
  equations via weak variations.
\newblock \emph{Journal of Dynamic Systems, Measurement, and Control}, 135(3).

\bibitem[{Moura and Fathy(2011)}]{Mou}
Moura, S. and Fathy, K. (2011).
\newblock Optimal boundary control \& estimation of diffusion-reaction pdes.
\newblock In \emph{American Control Conference}. San Francisco, USA.

\bibitem[{Pham et~al.(2013)Pham, Georges, and Besancon}]{Van}
Pham, V., Georges, D., and Besancon, G. (2013).
\newblock Receding horizon observer and control for linear $2\times2$
  hyperbolic systems of conservation laws.
\newblock In \emph{European Control Conference}. Zurich, Switzerland.

\bibitem[{Teo et~al.(1991)Teo, Goh, and Wong}]{Teo}
Teo, K., Goh, C., and Wong, K. (1991).
\newblock \emph{A unified computational approach to optimal control problem}.
\newblock Longman Scientific, NY.

\bibitem[{Vazquez and Krstic(2007)}]{Vazque}
Vazquez, R. and Krstic, M. (2007).
\newblock A closed-form feedback controller for stabilization of the linearized
  2-d navier-stokes poiseuille flow.
\newblock \emph{IEEE Transactions on Automatic Control}, 52, 2298--2312.

\bibitem[{Vazquez and Krstic(2014)}]{Marcum}
Vazquez, R. and Krstic, M. (2014).
\newblock Marcum q-functions and explicit kernels for stabilization of 2x2
  linear hyperbolic systems with constant coefficients.
\newblock \emph{Systems \& Control Letters}, 68, 33--42.

\bibitem[{Vazquez et~al.(2011)Vazquez, Krstic, and Coron}]{vaz}
Vazquez, R., Krstic, M., and Coron, J. (2011).
\newblock Backstepping boundary stabilization and state estimation of a
  2$\times$2 linear hyperbolic system.
\newblock In \emph{50th IEEE Conference on Decision and Control}. Orlando, USA.

\bibitem[{White(2007)}]{White}
White, F. (2007).
\newblock \emph{Fluid Mechanics}.
\newblock McGraw-Hill, NY.

\end{thebibliography}


\begin{thebibliography}{4}
\providecommand{\natexlab}[1]{#1}
\providecommand{\url}[1]{\texttt{#1}}
\providecommand{\urlprefix}{URL }
\expandafter\ifx\csname urlstyle\endcsname\relax
  \providecommand{\doi}[1]{doi:\discretionary{}{}{}#1}\else
  \providecommand{\doi}{doi:\discretionary{}{}{}\begingroup
  \urlstyle{rm}\Url}\fi

\bibitem[{Able(1956)}]{Abl:56}
Able, B. (1956).
\newblock Nucleic acid content of microscope.
\newblock \emph{Nature}, 135, 7--9.

\bibitem[{Able et~al.(1954)Able, Tagg, and Rush}]{AbTaRu:54}
Able, B., Tagg, R., and Rush, M. (1954).
\newblock Enzyme-catalyzed cellular transanimations.
\newblock In A.~Round (ed.), \emph{Advances in Enzymology}, volume~2, 125--247.
  Academic Press, New York, 3rd edition.

\bibitem[{Keohane(1958)}]{Keo:58}
Keohane, R. (1958).
\newblock \emph{Power and Interdependence: World Politics in Transitions}.
\newblock Little, Brown \& Co., Boston.

\bibitem[{Powers(1985)}]{Pow:85}
Powers, T. (1985).
\newblock Is there a way out?
\newblock \emph{Harpers}, 35--47.

\end{thebibliography}

\end{document}